\newtheorem{thm}{Theorem}[section]
\newtheorem{claim}[thm]{Claim}
\newtheorem{define}[thm]{Definition}
\newtheorem{com}[thm]{Comment}
\newtheorem{conjecture}[thm]{Conjecture}
\def\GL{{\mathbf{GL}}}
\def\e{{\mathbf{e}}}
\def\ord{{\mathbf{ord}}}
\def\F{{\mathbb{F}}}
\def\N{{\mathbb{N}}}
\def\I{{\mathbf{I}}}
\newcommand{\ip}[2]{\langle #1,#2 \rangle}
\def\_{\,\,\,\,\,}
\def\id{ \textit{id} }
\def\supp{\textsf{supp}}
\def\rank{\textsf{rank}}
\def\poly{\textsf{poly}}
\newcommand{\eps}{\epsilon}
\newcommand{\remove}[1]{}
\begin{document}

\title{ Rank-metric separation in irreducible representations of finite groups}

\author{Zeev Dvir\thanks{Department of Mathematics  and Department of Computer Science,
Princeton University.
Email: \texttt{zdvir@princeton.edu}. Research supported by NSF grant DMS-2246682.}}

\date{}
\maketitle

\begin{abstract}
We give a general lower bound on the rank of matrices of the form $\rho(h) - \I$ with $\rho : G \rightarrow \GL(\F^n)$ an irreducible representation of a finite group $G$. The main tool in the proof is a (strengthening) of a reduction due to  Efremenko from low rank  matrices spanned by a few images of $\rho$ to Locally Decodable Codes (LDCs), which are a special kind of error correcting codes. We then apply the known results on 2-query LDCs to derive our rank bound.
\end{abstract} 


\section{Introduction}

Let $\F$ be a field, $G$  a finite group and let $\rho : G \rightarrow \GL(\F^n)$ be an irreducible  representation. We let 
$$ \theta_\F =  \begin{cases}
1 & \text{if } \F \text{ is infinite} \\
1 - \frac{1}{|\F|} & \text{otherwise}
\end{cases}.$$
And, for any $h \in G$ we define
$$ \gamma_h = \begin{cases}
1 & \text{if } \ord(h) \text{ is even} \\
1 - \frac{1}{\ord(h)} & \text{otherwise}
\end{cases}.$$

Our main result is the following.
\begin{thm}\label{thm-main2LDCsep}
For all $h \in G$ such that $\rho(h) \neq \I$ we have
$$ \rank(\rho(h) - \I ) \geq \frac{\theta_\F \cdot \gamma_h \cdot  n}{\log_2|G|} \geq \frac{n}{3\log_2|G|}$$
\end{thm}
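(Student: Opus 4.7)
Let $r = \rank(\rho(h) - \I)$. The driving identity is
$$ \rho(gh) - \rho(g) = \rho(g)(\rho(h) - \I), \qquad g \in G, $$
so that every one of the $|G|$ pairs $(\rho(g), \rho(gh))$ differs by a matrix of rank at most $r$. The plan is to feed this structure into a (strengthened) version of Efremenko's reduction from representations with a common low-rank translate to 2-query Locally Decodable Codes, and then apply the known exponential lower bound on the codeword length of 2-query LDCs.

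Concretely, the reduction will produce a linear LDC $C : \F^k \to \F^N$ with codeword length $N = |G|$ and message dimension $k = \Omega(n)$; each coordinate of the codeword is an $\F$-linear functional of the encoded matrix, chosen from the span of $\{\rho(g)\}_{g\in G}$. Since $\rho$ is irreducible, Burnside/Jacobson density (applied after base change to a splitting field, if needed) tells us this span is $n^2$-dimensional inside $\End(\F^n)$, which is what allows $\Omega(n)$ independent rows of the encoding to be extracted as the message. The 2-query decoder that recovers a given message symbol picks a uniformly random $g \in G$ and reads positions $g$ and $gh$; the low-rank structure of $\rho(gh)-\rho(g)$ guarantees that an appropriate random $\F$-linear combination of the two reads yields the desired symbol. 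The success probability of this decoder is at least $\theta_\F \cdot \gamma_h$: the $\theta_\F$ factor accounts for a uniformly random scalar in $\F$ being nonzero, while the $\gamma_h$ factor reflects an orbit-averaging argument on the right $\langle h\rangle$-action on $G$---when $\ord(h)$ is even the action provides a perfect matching between $g$ and $gh$, but when $\ord(h)$ is odd one element per orbit is lost, giving exactly $1-1/\ord(h)$.

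With the LDC in hand, the 2-query LDC lower bound (Kerenidis--de Wolf over $\F_2$, and its extensions to arbitrary fields) implies that any such code with message dimension $k$ and decoding advantage $\delta$ must have codeword length at least $2^{\Omega(\delta k)}$. Substituting $N = |G|$, $k = \Omega(n)$, and $\delta = \theta_\F \gamma_h$ gives $|G| \geq 2^{\Omega(\theta_\F \gamma_h n / r)}$, which rearranges to the first inequality of the theorem. The final bound $n/(3\log_2 |G|)$ uses $\theta_\F \geq 1/2$ (attained at $|\F|=2$) together with $\gamma_h \geq 2/3$ (attained at $\ord(h)=3$, since $\ord(h)\geq 2$ throughout), so that $\theta_\F \gamma_h \geq 1/3$.

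The main technical obstacle is the LDC construction yielding the \emph{exact} decoding advantage $\theta_\F \gamma_h$, as opposed to a generic $\Omega(1)$; the strengthening of Efremenko's reduction alluded to in the abstract is presumably what buys this. This requires a careful analysis of how a pair $(\rho(g), \rho(gh))$ transfers a specific coordinate of the message through the rank-$r$ difference, together with precise bookkeeping of the $\langle h\rangle$-orbit loss when $\ord(h)$ is odd. A secondary concern is establishing the message dimension $\Omega(n)$; this is where irreducibility of $\rho$ is essential, and where, for non-splitting $\F$, a descent argument from a splitting-field statement is needed to ensure the rank bound proved upstairs passes back to $\F$.
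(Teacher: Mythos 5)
Your high-level strategy matches the paper exactly: factor the problem through a reduction to 2-query LDCs and invoke an exponential lower bound on the codeword length. The observation $\rho(gh)-\rho(g)=\rho(g)(\rho(h)-\I)$ is the right starting point, the $\theta_\F\cdot\gamma_h$ accounting for the decoding advantage is essentially what the paper does (the $\gamma_h$ factor indeed comes from matching within cycles of the right $\langle h\rangle$-action), and your final arithmetic giving $\theta_\F\gamma_h\geq 1/3$ is correct.

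However, there is a genuine gap in the core of the reduction: you assert that the LDC produced has message dimension $k=\Omega(n)$, with a justification (Burnside/Jacobson density, irreducibility forcing the span of $\{\rho(g)\}$ to be $n^2$-dimensional) that has nothing to do with the rank $r$. With the parameters you state --- $k=\Omega(n)$, decoding advantage $\delta=\theta_\F\gamma_h$ --- the $2$-LDC lower bound $N\geq 2^{\Omega(\delta k)}$ yields $|G|\geq 2^{\Omega(\theta_\F\gamma_h n)}$, with no $1/r$ in the exponent. That is simply false in general (e.g.\ $\GL_2(\F_q)$ over $\C$ has irreps of dimension $\approx q$ but $\log_2|G|\approx 4\log_2 q\ll q$), and the $1/r$ you write in the final displayed bound appears from nowhere. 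The correct reduction, and the heart of the paper's argument, produces a code of message dimension only $t=\lceil n/r\rceil$, \emph{not} $\Omega(n)$: one factors $\rho(h)-\I=YX^t$ with $Y,X$ of width $r$, sets $U=\operatorname{col}(Y)$, and then (this is the paper's Claim~\ref{cla-choiceofbasis}) uses irreducibility to find $t\geq n/r$ group translates $U^{g_1},\ldots,U^{g_t}$ that sum to $\F^n$, together with vectors $w_j$ with $w_i\perp U^{g_j}\Leftrightarrow i\neq j$. Projecting the $\rho$-orbit of a generic $z$ through $W^t$ (where $W$ has columns $w_j$) gives the $(\F^t)$-valued code in which each coordinate $j\in[t]$ can be locally decoded from the pair $(g_jhs,g_js)$. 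So the role of irreducibility is not to span $\Omega(n)$ message coordinates directly, but to guarantee $\lceil n/r\rceil$ translates of the $r$-dimensional space $U$ covering $\F^n$ --- this is exactly where the rank enters and where your argument is missing a step. You also need $\delta$ normalized as $\theta_\F\gamma_h/2$ to match the paper's combinatorial LDC definition, but that is cosmetic; the message-dimension issue is the substantive one.
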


	The following construction shows the tightness of this bound: Take the group of $n \times n$ matrices generated by (i) all diagonal matrices with diagonal entries in $\{1,-1\}$ (suppose $\text{char}(\F) \neq 2$) and (ii) all cyclic shifts of the $n$ coordinates. This group has size $n2^n$ and its action on $\F^n$ is irreducible. However, the rank of $\rho(h) - I$ can be as low as 1 for the diagonal matrix with exactly one negative entry on the diagonal (a reflection).

Another way to state this theorem is as an upper bound on the dimension of the fixed space $$C(h) = \{ v \in \F^n \,|\, \rho(h)v = v\}$$ of the form
$$ \dim( C(h) ) \leq \left(1 - \frac{\theta_\F \gamma_h}{\log_2|G|}\right)n.$$  This complements a result of Guralnick and Mar\'oti who showed in \cite{GurMar11} a much stronger upper bound for the {\em average} dimension of $C(h)$: 
$$ \frac{1}{|G|} \sum_{h \in G} \dim( C(h) ) \leq \frac{n}{2},$$ proving a conjecture of Neumann \cite{neumann1966a}.

The proof of Theorem~\ref{thm-main2LDCsep} can also be used to bound the rank distance to a  scalar multiple of the identity, showing that
\begin{equation}\label{eq-rankwithlambda}
	 \rank(\rho(h) - \lambda \I ) \geq \frac{\theta_\F \cdot \gamma_h \cdot  n}{2\log_2(2|G|)},
\end{equation}
 for any $\lambda \in \F$ such that $\rho(h) \neq \lambda \I$ (see Comment~\ref{com-lambda} at the end of Section~\ref{sec-rep2LDC}). When $G$ is not too large (specifically, when $G \ll 2^{\sqrt{n}}$) this bound improves on a theorem of Guralnick and Saxl \cite{GurSax03} which gives an lower bound of $\sqrt{n}/2$ for the same quantity.

The proof of Theorem~\ref{thm-main2LDCsep} is via a connection, originally given by Efremenko \cite{Efr12}, between representations and a type of error correcting codes called Locally Decodable Codes (LDCs). To formally define these codes we need the notion of a $q$-matching.

\begin{define}[$q$-Matching]
Let $n,q \geq 1$ be integers. A {\em $q$-matching} over $[n]$ is a family of subsets $M \subset 2^{[n]}$ such that 
\begin{enumerate}
	\item For every $S \in M$, $|S|=q$.
	\item For every $S,T \in M$, $S \neq T$ we have $S \cap T = \emptyset.$
\end{enumerate}
The {\em size} of a $q$-Matching  $M$ is the number of sets in $M$.
\end{define}

Locally Decodable Codes were first defined explicitly by Katz and Trevisan \cite{KatzTre00} (but appeared implicitly in earlier works). For our purposes it is enough to consider linear  $LDC$s. 
\begin{define}[Linear $(q,\delta)$-LDC]\label{def-LDC}
Let $\F$ be a field, $q,n,m \geq 1$ integers and $\delta \in [0,1]$. A sequence $A = (a_1,\ldots,a_m) \in (\F^n)^m$ of $m$ vectors in $\F^n$ is a linear {\em $(q,\delta)$-Locally Decodable Code (LDC)} if, for all $i \in [n]$ there exists a $q$-Matching $M_i$ over $[m]$ such that
\begin{enumerate}
	\item For every $S \in M_i$ we have that $\e_i$, the $i$'th standard basis vector, is spanned by the set $\{ a_j \,|\, j \in S\}$.
	\item  $\sum_{i=1}^n |M_i| \geq \delta m n$.
\end{enumerate}
\end{define}
It is more common to define LDCs as linear encodings (computed by a matrix whose rows are the $a_i$'s) admitting a probabilistic `local decoding' procedure, which can recover the  $i$'th coordinate of a message from a possibly corrupted encoding using at most $q$-queries. For our purposes, however, the above definition will be easier to work with  and it can be shown that the two definitions are equivalent up to constants for linear codes (see \cite{DviShp07} for example). 

One should, in general, think of $q$ and $\delta$ as fixed constants and then ask about the dependence between $m$ and $n$ as they tend to infinity, where the main `goal' is to either construct codes with $m$ as small as possible (ideally polynomial in $n$) or show that such constructions are not possible.\footnote{Unlike with normal error correcting codes, there are no probabilistic constructions showing existence of LDCs.} We sometimes omit the parameter $\delta$  and refer to a code as a $q$-LDC.

For the case $q=2$, which is our main interest, it will be convenient to work with the following, slightly more restrictive definition. In this `special' form  of $2$-LDCs we require the pairs in each matching to span the corresponding standard basis vector as a multiple of their difference (essentially, we are asking that the two vectors differ only in the $i$'th coordinate).\footnote{One can  again show that the two definitions are equivalent up to constants \cite{DviShp07}.}
\begin{define}[Special  $(2,\delta)$-LDC]\label{def-special2ldc}
Let $\F$ be a field, $n,m \geq 1$ integers and $\delta \in [0,1]$. A sequence $A = (a_1,\ldots,a_m) \in (\F^n)^m$ of $m$ vectors in $\F^n$ is a `special' linear {\em $(2,\delta)$-Locally Decodable Code (LDC)} if, for all $i \in [n]$ there exists a $2$-Matching $M_i$ over $[m]$ such that
\begin{enumerate}
	\item For every $S = \{j_1,j_2\} \in M_i$ there is a scalar $\lambda \in \F$ such that $\e_i = \lambda(a_{j_1} - a_{j_2})$.
	\item $\sum_{i=1}^n |M_i| \geq \delta m n$.
\end{enumerate}

\end{define}

Efremenko, in \cite{Efr12}, proved that $q$-LDCs can be constructed from an irreducible representation $\rho : G \rightarrow \GL(\F^n)$ whenever there exists $q$ group elements $h_1,\ldots,h_q$ such that the matrices $\rho(h_i)$ span a rank one matrix. We are able to slightly modify Efremenko's proof  in a way that  allows the rank to be larger than one, obtaining the following theorem.
\begin{thm}\label{thm-rep2LDC}
Suppose that there are $q$ group elements $h_1,\ldots,h_q \in G$ and $q$ scalars $\alpha_1,\ldots,\alpha_q \in \F$ such that $$ 0 < \rank\left(\sum_{\ell=1}^q \alpha_\ell \rho(h_\ell)\right) \leq R.$$  Then, there exists a linear $(q,\delta)$-LDC $A = (a_1,\ldots,a_m) \in (\F^t)^m$ with 
$$m = |G|, \,\, t = \lceil n/R \rceil, \text{ and } \delta =\theta_\F/q^2.$$
\end{thm}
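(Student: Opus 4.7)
The plan is to extend Efremenko's reduction \cite{Efr12}, which covers the rank-one case, to arbitrary rank $R$. Set $M := \sum_{\ell=1}^q \alpha_\ell \rho(h_\ell)$, $V := \Im(M)$, and $R' := \rank(M) \le R$. Left-translating by $g \in G$ gives $\rho(g)M = \sum_\ell \alpha_\ell \rho(gh_\ell)$, so picking $u \in \F^n$ with $v := Mu \ne 0$ yields the fundamental identity
\begin{equation*}
\rho(g)\,v \;=\; \sum_{\ell=1}^q \alpha_\ell\, \rho(gh_\ell)\,u, \qquad g \in G,
\end{equation*}
whose left-hand side lies in $\rho(g)V$, a subspace of dimension at most $R$. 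Such a $u$ exists since $M \ne 0$, and a generic choice over finite fields costs only the factor $\theta_\F = 1 - 1/|\F|$.

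By irreducibility of $\rho$, the translates $\{\rho(g)V : g \in G\}$ jointly span $\F^n$, so a greedy argument produces $g_1,\ldots,g_t \in G$ with $t = \lceil n/R \rceil$ satisfying $\F^n = \rho(g_1)V + \cdots + \rho(g_t)V$ (each summand adds at most $R$ new dimensions to the running sum). Using duality, build a linear surjection $\pi : \F^n \twoheadrightarrow \F^t$ whose $k$-th component functional is non-trivial on $\rho(g_k)V$ but vanishes on $\sum_{j\ne k}\rho(g_j)V$, and define the code vectors $a_g := \pi(\rho(g)u) \in \F^t$ for $g \in G$; this gives $m = |G|$ positions in $\F^t$. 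Applying $\pi$ to the identity above yields $\pi(\rho(g)v) = \sum_\ell \alpha_\ell a_{gh_\ell}$, so whenever $\pi(\rho(g)v)$ is a nonzero scalar multiple of some $\e_i$, the $q$-set $S_g := \{gh_1,\ldots,gh_q\}$ witnesses $\e_i \in \mathrm{span}\{a_j : j \in S_g\}$ and can be added to the matching $M_i$. Two such $S_g,S_{g'}$ overlap only when $g^{-1}g' \in \{h_\ell h_{\ell'}^{-1}\}$, a set of size at most $q^2$, so a greedy extraction loses at most a factor of $q^2$ in each $M_i$.

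It remains to bound the density $\sum_i |M_i| \ge \delta m t$. For every $g$ in the left coset $g_k H$ of the stabilizer $H := \{h \in G : \rho(h)V = V\}$ we have $\rho(g)V = \rho(g_k)V$, forcing $\pi(\rho(g)v) \in \F\e_k$ automatically; non-vanishing of the resulting scalar is a single algebraic condition on $u$, which fails on a proper subvariety and hence holds for a $\theta_\F$-fraction of choices over a small field. Summing over the $t$ chosen cosets produces at least $\theta_\F \cdot |H| \cdot t$ contributing $g$'s, and greedy extraction then yields $\sum_i |M_i| \ge (\theta_\F/q^2)\,m\,t$, matching $\delta = \theta_\F/q^2$. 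The step I expect to be most delicate is the coupling in this last paragraph: for $R = 1$ each $\rho(g)V$ is a line and the $t = n$ coordinates of $\F^n$ can be identified naturally with cosets of $H$, but for $R > 1$ the orbit structure of $V$ under $G$ need not match the $t$-fold decomposition cleanly, and one has to argue either that the $t$ chosen pieces can be made to cover essentially all of $G$, or that the $R$-dimensional slack inside each $\rho(g_k)V$ absorbs residual contributions, so that the density constant $\theta_\F/q^2$ is preserved independently of the index $[G:H]$.
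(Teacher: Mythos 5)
Your reduction is close in spirit but has a genuine gap in the density count, and you have already flagged the dangerous spot yourself. The problem is that your matching tuples are left translates $S_g = \{g h_1, \ldots, g h_q\}$, and the requirement that $\pi(\rho(g)v) \in \F\e_k$ forces $g$ into a coset $g_k H$ of the stabilizer $H = \{h : \rho(h)V = V\}$. This gives you only $\theta_\F\, |H|\, t$ usable $g$'s, not $\theta_\F\, |G|\, t$, so the density you obtain is $\theta_\F |H|/(q^2|G|)$, which degrades by the factor $[G:H]$. There is no reason for $H$ to be large; indeed if $H$ were all of $G$ then $V$ would be invariant and hence $V = \F^n$ by irreducibility, making the statement trivial. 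So the argument as written does not recover $\delta = \theta_\F/q^2$.

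The idea you are missing is a right translation, which is enabled by a rank factorization of $D = \sum_\ell \alpha_\ell \rho(h_\ell)$. Write $D = Y X^t$ with $X,Y$ of size $n\times R$ and full column rank, and take $U$ to be the column span of $Y$ (rather than the image $V = \Im D$, though these coincide). Build $W^t$ exactly as you built $\pi$, using $U^{g_j}$ in place of $\rho(g_j)V$. The key algebraic fact is then
\begin{equation*}
W^t \rho(g_j)\, D\, \rho(s)\, z \;=\; \left(W^t Y^{g_j}\right) X^t \rho(s) z \;=\; \left(\e_j\, \hat w_j^t\right) X^t \rho(s) z \;=\; \beta_{j,s}(z)\, \e_j
\end{equation*}
for \emph{every} $s \in G$, because $W^t Y^{g_j} = \e_j \hat w_j^t$ is a rank-one matrix whose image is $\span\{\e_j\}$ regardless of what you multiply on the right. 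Consequently the tuples to use are $T_{j,s} = \{g_j h_1 s, \ldots, g_j h_q s\}$, parametrized by both $j \in [t]$ and $s \in G$, and each spans $\e_j$ whenever $\beta_{j,s}(z) \neq 0$. Now for each $j$ you have $|G|$ candidate tuples, of which at least $|G|/q^2$ are pairwise disjoint, and randomizing $z$ over $\F^n$ retains a $\theta_\F$ fraction. This recovers $\delta = \theta_\F/q^2$ without any dependence on the stabilizer of $V$. In short: your construction of $\pi$ and the spanning identity are correct, but you have only one tuple per coordinate $j$ (up to the stabilizer), whereas the factorization $D = Y X^t$ gives $|G|$ tuples per $j$ and eliminates the $[G:H]$ loss.
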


For $q=2$ we can w.l.o.g assume that $h_2 = \id$. The proof of Theorem~\ref{thm-rep2LDC} then gives the following, slightly more refined bound for matrices of the form $\rho(h) - \I$.
\begin{thm}\label{thm-repLDC-2q}
 Let $h \in G$ be such that
 $$0 <\rank\left(\rho(h) - \I \right) \leq R. $$
Then there exists a special $(2,\delta)$-LDC $A = (a_1,\ldots,a_m) \in (\F^t)^m$ with 
$$m = |G|,\,\, t = \lceil n/R \rceil \text{ and }\,\, 
\delta = \frac{\theta_\F \gamma_h}{2}$$
\end{thm}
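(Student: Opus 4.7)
The plan is to adapt the construction behind Theorem~\ref{thm-rep2LDC} to the specific combination $\rho(h)-\I=\rho(h)-\rho(\id)$, extracting two improvements: the constant $\gamma_h/2$ in place of $1/q^2=1/4$, and the ``special'' form of the LDC required by Definition~\ref{def-special2ldc}. Both gains come from the fact that $\rho(h)-\I$ is tagged by a single element $h$, which lets us match elements of $G$ via the left-action of $\langle h\rangle$ by translation, rather than via arbitrary unordered $q$-tuples as in the general proof.

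Set $V=\Im(\rho(h)-\I)$, of dimension $\le R$. I would first use irreducibility of $\rho$ to choose $g_1=\id,g_2,\dots,g_t\in G$ with $t=\lceil n/R\rceil$ such that the conjugate subspaces $V_i:=\rho(g_i)V$ span $\F^n$, and then produce a basis of $\F^n$ adapted to this covering: for each $i$, pick a nonzero vector $b_i\in V_i$ together with a ``discarded'' subspace $W_i\subseteq V_i$ of dimension at most $R-1$, arranged so that $\{b_i\}_{i=1}^t\cup\bigcup_i W_i$ is a basis of $\F^n$. Define the linear map $\pi:\F^n\to\F^t$ by $\pi(b_i)=\e_i$ and $\pi(W_i)=0$; by construction $\pi(V_i)\subseteq\F\cdot\e_i$. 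Finally, pick $v_0\in\F^n$ (tuned below) and set $a_g=\pi(\rho(g)v_0)\in\F^t$.

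For each $i\in[t]$, set $h_i=g_i h g_i^{-1}$, a conjugate of $h$ so that $\ord(h_i)=\ord(h)$ and $\Im(\rho(h_i)-\I)=V_i$. Build the matching $M_i$ from the translation action of $h_i$ on $G$: partition $G$ into cosets of $\langle h_i\rangle$ of size $k=\ord(h)$ and on each coset pair up $(h_i^{2s}g',h_i^{2s+1}g')$ for $s=0,\dots,\lfloor k/2\rfloor-1$, yielding $|G|\gamma_h/2$ candidate pairs. For each such pair,
\[
a_{h_i g'}-a_{g'}=\pi\bigl(\rho(g_i)(\rho(h)-\I)\rho(g_i^{-1}g')v_0\bigr)\in\pi(V_i)\subseteq\F\cdot\e_i,
\]
which is automatically of the ``special'' form in Definition~\ref{def-special2ldc}; retain it in $M_i$ precisely when the scalar is nonzero. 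The ``bad'' (scalar-zero) event for a fixed pair is a single linear condition $v_0\in \rho(g_i^{-1}g')^{-1}\tilde W_i$ for a hyperplane $\tilde W_i\subseteq\F^n$ (namely $(\rho(h)-\I)^{-1}(\rho(g_i^{-1})W_i)$), so averaging over uniformly random $v_0\in\F^n$ gives each bad event probability $1-\theta_\F$. Hence some choice of $v_0$ has at most $(1-\theta_\F)\cdot t\cdot|G|\gamma_h/2$ bad combinations, leaving $\sum_{i=1}^t|M_i|\ge(\theta_\F\gamma_h/2)|G|t$ good ones, matching the claimed $\delta=\theta_\F\gamma_h/2$.

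The main obstacle I anticipate is realizing the covering $V_1+\cdots+V_t=\F^n$ by only $t=\lceil n/R\rceil$ conjugates of $V$, compatible with the basis splitting that guarantees $\pi(V_i)\subseteq\F\cdot\e_i$. Irreducibility ensures that the full $G$-orbit $\{\rho(g)V\}_{g\in G}$ sums to $\F^n$, but pruning this orbit down to $\lceil n/R\rceil$ conjugates in sufficiently transverse position to yield a clean basis requires a careful greedy/dimension-counting argument that really uses the structure of $V$; this is also where the corner case $R\nmid n$ forces the last block to be taken partial, contributing the ceiling.
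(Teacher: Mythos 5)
Your overall plan mirrors the paper's quite closely: project the $\rho$-orbit of a random vector $v_0$ through a linear map $\pi:\F^n\to\F^t$ chosen so that $\pi$ carries each conjugate $V_i=\rho(g_i)\Im(\rho(h)-\I)$ into the line $\F\e_i$, then build the matchings $M_i$ from (translates of) cosets of $\langle h\rangle$ and discard the bad pairs by averaging over $v_0$. Your pair $(h_ig',g')$ with $h_i=g_ihg_i^{-1}$ is, after the substitution $g'=g_is$, exactly the paper's pair $(g_jhs,g_js)$, and your count of $\lfloor\ord(h)/2\rfloor$ pairs per coset yields $\gamma_h/2$ as needed; the random-$v_0$ argument gives the extra $\theta_\F$ just as in the paper.

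The genuine gap is exactly where you flag it: the construction of $\pi$. Your plan is to pick a distinguished vector $b_i\in V_i$ and a complementary subspace $W_i\subset V_i$ and arrange $\{b_i\}_i\cup\bigcup_i W_i$ to be a basis of $\F^n$. This cannot work in general. You need $\pi(V_i)\subseteq\F\e_i$ for \emph{all} of $V_i$, so $W_i=V_i\cap\ker\pi$ must have codimension one in $V_i$. If $\dim V_i=R$ for every $i$, the resulting collection has $t+t(R-1)=tR$ vectors, and when $tR>n$ (i.e.\ $R\nmid n$, or whenever the $V_i$'s overlap) they cannot be independent. Moreover, once the $V_i$'s fail to be in direct sum, the conditions $\pi(b_i)=\e_i$, $\pi(W_i)=0$ may be mutually inconsistent on $V_i\cap V_j$. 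The paper sidesteps this entirely by working dually (its Claim~\ref{cla-choiceofbasis}): choose $g_1,\dots,g_t$ \emph{minimal} with $\sum_iV_i=\F^n$ (so automatically $t\ge n/R$); minimality gives $V_i\not\subset\sum_{j\ne i}V_j$, hence one can pick a single functional $w_i$ with $w_i\perp V_j$ for $j\ne i$ but $w_i\not\perp V_i$. Setting $\pi=W^t$ with rows $w_i^t$ then gives $\pi(V_j)\subseteq\F\e_j$ and $\pi(V_j)\ne 0$ for free, with no direct-sum hypothesis. In short, you only need one isolating functional per $V_i$, not a basis decomposition adapted to them, and the minimality trick is the clean way to produce those functionals; replacing your basis-splitting step with this dual construction closes the gap and recovers the paper's proof.
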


We note here that the proof of these theorems actually shows that  the sequence $A$ can be taken to be a projection of a $\rho$-orbit. That is, there exists a linear map $P : \F^n \rightarrow \F^t$ and a vector $z \in \F^n$ such that for all $j \in [m]$, $a_j = P(\rho(g_j) z)$, where $G = \{g_1,\ldots,g_m\}$ is some ordering of $G$ (in fact, any $z$ avoiding a given  finite set of hyperplanes will do).

We will derive Theorem~\ref{thm-main2LDCsep}  by combining this theorem with known bounds on the parameters of $2$-LDCs. 
One example of a $(2,\delta)$-LDC (over any field) is the Hadamard code, given by the list of all $m = 2^n$ zero-one vectors  $A = \{0,1\}^n$. This code has $\delta=1/2$ and is even in `special' form (the basis elements can be spanned as differences).  It is  known that one cannot construct $2$-LDCs that are much better than the Hadamard codes. This was first shown in \cite{GKST02} for small finite fields and then generalized to all fields in \cite{DviShp07}. We will prove here a slightly stronger bound for arbitrary fields, avoiding the constant factor loss inherent in \cite{DviShp07} (where $\delta$ is replaced by $\delta/2$). In fact, our proof is a direct adaptation of the information theoretic proof appearing in \cite{GKST02} and attributed to Alex Samorodnitsky.

\begin{thm}\label{thm-2LDClowerbound-new}
Let $\F$ be any field and let $A = (a_1,\ldots,a_m) \in (\F^n)^m$ be a special linear $(2,\delta)$-LDC. Then $$ m \geq 2^{2\delta n}.$$
\end{thm}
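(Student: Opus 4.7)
The plan is to view $m$ as an entropy ceiling for a uniformly chosen codeword index and then to bound the aggregate matching mass $\sum_i |M_i|$ by that entropy. Let $J$ be uniform on $[m]$ and set $U = a_J \in \F^n$; since $U$ is supported on at most $m$ distinct vectors, $H(U) \leq \log_2 m$ (all entropies are in bits). I will prove the inequality $\sum_{i=1}^n |M_i| \leq (m/2)\, H(U)$, which, together with the LDC hypothesis $\sum_i |M_i| \geq \delta m n$, immediately yields $\delta n \leq (\log_2 m)/2$, and hence $m \geq 2^{2\delta n}$.

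Via Han's inequality $\sum_i H(U_i \mid U_{-i}) \leq H(U)$, it suffices to prove the per-coordinate bound $|M_i| \leq (m/2)\, H(U_i \mid U_{-i})$, where $U_{-i}$ denotes the projection of $U$ to coordinates other than $i$. For this, I partition $[m]$ into classes $C_w = \{j : (a_j)_{-i} = w\}$ indexed by $w \in \F^{n-1}$, and let $p^*(w)$ be the largest proportion of $C_w$ sharing a single value in coordinate $i$. By Definition~\ref{def-special2ldc}, every pair in $M_i$ lies inside one class $C_w$ and joins two indices with distinct $i$-th coordinates, so the portion of $M_i$ coming from $C_w$ has size at most $|C_w|\cdot\min\bigl\{\tfrac{1}{2},\, 1 - p^*(w)\bigr\}$. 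A short case analysis --- splitting on whether $p^*(w) \leq \tfrac{1}{2}$ and using the elementary inequalities $-p\log_2 p \geq p$ for $p \leq \tfrac{1}{2}$ and $h(q) \geq 2q$ for $q \leq \tfrac{1}{2}$ (with $h$ the binary entropy function) --- shows $2\min\{\tfrac{1}{2}, 1-p^*(w)\} \leq H(U_i \mid U_{-i} = w)$. Averaging over $w$ with weights $|C_w|/m$ produces the required per-coordinate inequality.

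The main subtlety to keep in mind is that the vectors $a_1,\dots,a_m$ need not be distinct and that $\F$ may even be infinite; the entropy framing handles both automatically, since $H(U)$ accounts for multiplicities and the matching-versus-entropy bound is stated directly at the level of weighted classes, with no appeal to the cardinality of $\F$. In the distinct-and-binary case, the argument specializes to a clean application of Harper's edge-isoperimetric inequality on $\{0,1\}^n$, which is the picture to keep in mind. The construction therefore gives $m \geq 2^{2\delta n}$ over an arbitrary field, as claimed.
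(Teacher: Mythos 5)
Your proof is correct and is, at its core, the same information-theoretic argument as the paper's: compare $H(U)\leq \log_2 m$ to a per-coordinate lower bound on conditional entropy, using the fact that the ``special'' condition forces each pair of $M_i$ to land in a single cell of the partition determined by the non-$i$ coordinates. There are two local differences worth noting. First, you condition on the full complement $U_{-i}$ and invoke Han's inequality $\sum_i H(U_i\mid U_{-i})\leq H(U)$, whereas the paper conditions on the prefix $X_{1,\ldots,i-1}$ and uses the chain rule as an equality $H(X)=\sum_i H(X_i\mid X_{1,\ldots,i-1})$; these are interchangeable here, since Han's inequality is itself the chain rule plus ``conditioning reduces entropy,'' and the matching pairs agree off coordinate $i$ so they respect either partition. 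Second, for the per-cell bound you replace the paper's Claim~\ref{cla-match-entropy} (which goes via the auxiliary pair-index random variable and the event that $X$ lands in the matching, giving $H\geq 2s/t$ directly) with a $p^*$-based case split combined with $-p\log_2 p\geq p$ for $p\leq 1/2$ and $h(q)\geq 2q$ for $q\leq 1/2$. Both routes yield $H(U_i\mid U_{-i}=w)\geq 2|M_i^w|/|C_w|$ and hence the same final bound $m\geq 2^{2\delta n}$; your version makes the ``max-mass'' structure explicit while the paper's is perhaps a bit shorter. Your remark that the argument needs no finiteness of $\F$ and tolerates repeated codewords is correct and matches the paper's setup.
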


 Combining this bound with our generalization of  Efremenko's reduction we can prove our main theorem. 
\begin{proof}[Proof of Theorem~\ref{thm-main2LDCsep}]
Let $$ 	\rank(\rho(h) - \I ) = R>0. $$ By Theorem~\ref{thm-repLDC-2q} there exists a special linear $(2,\delta)$-LDC $A = (a_1,\ldots,a_m) \in (\F^t)^m$ with $m = |G|$, $ t \geq \frac{n}{R}$ and 
$\delta = \frac{\theta_\F \gamma_h}{2}.$ Applying Theorem~\ref{thm-2LDClowerbound-new} we get that
$$ m = |G| \geq 2^{2\delta t} \geq 2^{\theta_\F \gamma_h n/R},  $$ from which the bound follows.
\end{proof}

\paragraph{Organization:} We prove both Theorem~\ref{thm-rep2LDC} and Theorem~\ref{thm-repLDC-2q}  in Section~\ref{sec-rep2LDC}. We prove  Theorem~\ref{thm-2LDClowerbound-new} in Section~\ref{sec-LDC}. In Section~\ref{sec-3LDC} we discuss  the case of LDCs with  $q \geq 3$ and state a conjecture on representations motivated by conjectured bounds for these codes.

\paragraph{Acknowledgements:} I am grateful to Robert Guralnick and Klim Efremenko for helpful comments.

\section{From representations to LDCs}\label{sec-rep2LDC}

We begin with the proof of Theorem~\ref{thm-rep2LDC}. We treat all vectors $w \in \F^n$ as column vectors. For a matrix $A$ over $\F$ we denote by $A^t$ the transpose  of $A$. For a subspace $U \subset \F^n$ we denote by $U^\bot$ the subspace of all vectors $w \in \F^n$ such that the standard inner product  $u^t w = \ip{u}{w}=0$ for all $u \in U$. For a group element $g \in G $ and a subspace $U \subset \F^n$ we denote $U^g = \{\rho(g)u\,|\, u \in U\}$.

We will need the following easy claim which replaces the `dual basis' in Efremenko's construction (when $R=1$).

\begin{claim}\label{cla-choiceofbasis}
Let  $\rho : G \rightarrow \GL(\F^n)$ be an irreducible  representation of a finite group $G$. Let $U \subset \F^n$ be a subspace with $\dim(U)=k$. Then there exists $t \geq n/k$ group elements $g_1,\ldots,g_t \in G$ and $t$ vectors $w_1,\ldots,w_t \in \F^n$ such that $w_i \in \left(U^{g_j}\right)^\bot$ if and only if $i\neq j$.
\end{claim}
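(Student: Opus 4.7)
The plan is to combine irreducibility of $\rho$ with a minimal spanning subcollection argument, then dualize to obtain the witnesses $w_i$.

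First, assume $U \neq 0$ (otherwise the statement is vacuous). The subspace $V = \sum_{g \in G} U^g$ is a nonzero $G$-invariant subspace of $\F^n$, since for any $g', g \in G$ we have $\rho(g') U^g = U^{g'g} \subset V$. By irreducibility of $\rho$, this forces $V = \F^n$. This is the only place where irreducibility is used, and it is the key step that makes the claim true.

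Next, I extract a minimal spanning subcollection by greedy removal. Start with $S = G$ and repeatedly discard any $g \in S$ for which $U^g \subset \sum_{g' \in S \setminus \{g\}} U^{g'}$; the spanning property $\sum_{g \in S} U^g = \F^n$ is preserved at each step, and the process terminates (since $|S|$ strictly decreases) at some $S = \{g_1, \ldots, g_t\}$ enjoying
\[
\sum_{j=1}^t U^{g_j} = \F^n \quad \text{and} \quad U^{g_i} \not\subset \sum_{j \neq i} U^{g_j} \text{ for every } i.
\]
The spanning condition gives $n = \dim\bigl(\sum_j U^{g_j}\bigr) \leq \sum_j \dim(U^{g_j}) = tk$, so $t \geq n/k$ as required.

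Finally, I produce the vectors $w_i$ by duality. Under the standard inner product, the non-containment $U^{g_i} \not\subset \sum_{j \neq i} U^{g_j}$ translates into the strict containment of orthogonal complements
\[
\bigcap_{j \neq i} \bigl(U^{g_j}\bigr)^\bot \;\not\subset\; \bigl(U^{g_i}\bigr)^\bot.
\]
Choose $w_i$ lying in the left-hand intersection but outside $(U^{g_i})^\bot$. By construction, $w_i \in (U^{g_j})^\bot$ for every $j \neq i$ and $w_i \notin (U^{g_i})^\bot$, which is exactly the required biconditional.

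The proof is essentially a matroid/linear-algebra exercise once irreducibility has been invoked, and I do not expect any real obstacle; the one point that requires a moment of care is checking that the greedy deletion leaves a genuinely irredundant collection (i.e., that the non-containment holds against the \emph{final} index set, not merely against the set at the moment of selection), which the minimal-spanning formulation handles automatically.
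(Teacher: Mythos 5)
Your proof is correct and follows essentially the same route as the paper: invoke irreducibility to see that the $G$-translates of $U$ span $\F^n$, pass to an inclusion-minimal spanning subcollection $\{U^{g_1},\ldots,U^{g_t}\}$, and use the resulting irredundancy together with orthogonal complements to extract the $w_i$; the dimension count $n \leq tk$ gives $t \geq n/k$. You simply make explicit a few steps the paper leaves implicit (the $G$-invariance of $\sum_g U^g$, the greedy removal, and the dualization), but the argument is the same.
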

\begin{proof}
	Take $g_1,\ldots,g_t$ to be a minimal set of group elements for which 
	\begin{equation}\label{eq-sum-Ug-span}
		\sum_{j=1}^t U^{g_j}= \F^n.
	\end{equation}
	Such a set exists since the representation is irreducible and so the union of all of $U^g, g \in G$ spans $\F^n$. By minimality, we must have that for all $i \in [t]$
	\[ U^{g_i} \not\subset \sum_{j \neq i} U^{g_j}. \]
Hence, we can find, for each $i \in [t]$ a vector $w_i$ that is orthogonal to all the subspaces $\{  U^{g_j}\,|\, j\neq i \}$ and is not orthogonal to $U^{g_i}$.
The bound  $t \geq n/k$ stems from (\ref{eq-sum-Ug-span}) and the fact that $\dim(U^g)=k$ for all $g \in G$.
	\end{proof}

Let 
$$D = \sum_{\ell=1}^q \alpha_\ell \rho(h_\ell)$$ 
and suppose w.l.o.g that 
$$ \rank(D) = R \geq 1. $$
Thus, there exist two $n \times R$ matrices $X$ and $Y$ (both of rank $R$) such that 
$$ D = Y X^t.$$ 

Let $U \subset \F^n$ be the ($R$-dimensional) column span of $Y$. For $g \in G$ we denote by $$Y^g = \rho(g)Y$$ so that $U^g$ is the column space of $Y^g$.

We can now apply Claim~\ref{cla-choiceofbasis} to find group elements $g_1,\ldots,g_t \in G$ and vectors $w_1,\ldots,w_t \in \F^n$ with $t = \lceil n/R \rceil$ such that 
\begin{equation}\label{eq-wi_in_starj}
	w_i \in (U^{g_j})^\bot \,\,\textsf{iff}\,\, i\neq j.
\end{equation}
For each $j \in [t]$ define $\hat w_j$ to be 
\begin{equation}\label{eq-Ygj-times-wj}
	 0 \neq \hat w_j = (Y^{g_j})^t w_j \in \F^R
\end{equation}
($\hat w_j$ is non-zero because of (\ref{eq-wi_in_starj})). Let $W$ be the $n \times t$ matrix with columns $w_1,\ldots,w_t$. Then, from (\ref{eq-wi_in_starj}) and (\ref{eq-Ygj-times-wj}) we get that, for all $j \in [t]$ we have
\begin{equation}\label{eq-WstarYgj}
	W^t Y^{g_j} = \e_j \hat w_j^t,
\end{equation}
where $\e_j$ is the $j$'th standard basis (column) vector (note that the r.h.s $\e_j \hat w_j^t$ is a rank one $t \times R$ matrix). Indeed, the $i$'th row of the product $W^t Y^{g_j}$ will be zero for $i \neq j$ (since $w_i$ is orthogonal to the column span of $Y^{g_j}$ for $i \neq j$) and will be $\hat w_j$ for $i=j$. 

For  any  $z \in \F^n$, $j \in  [t]$, $s \in G$ we define
\begin{equation}\label{eq-beta_js}
	\beta_{j,s}(z) = \ip{X \hat w_j}{\rho(s)z}.
\end{equation}
Since $X$ is full rank and $\hat w_j \neq 0$, the set of $z$ for which $\beta_{j,s}(z)=0$ is a non trivial hyperplane. 

We are now ready to define the final LDC. Take $z \in \F^n$ to be determined later. For $s \in G$ we let 
$$ a_s = W^t \rho(s)z $$  and let 
$$A = (a_s)_{s \in G} \in (\F^t)^{|G|}$$
 (it is more natural in this case to index the elements of $A$ using $G$). We will now proceed to show that $A$ is an LDC (for some appropriate choice of $z$).

\begin{claim}\label{cla-spanning_q_tuples}
Let $A$ be defined above. Then, for every  $s \in G$ and any $j \in [t]$ we have
\begin{equation}
	\sum_{\ell=1}^q  \alpha_\ell a_{g_j h_\ell s} = \beta_{j,s}(z)  \e_j. 
\end{equation}	
\end{claim}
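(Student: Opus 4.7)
The proof is a direct chain of substitutions using the definitions and the key identity (\ref{eq-WstarYgj}). The plan is to start from the left-hand side, expand using the definition $a_s = W^t \rho(s) z$, then use the fact that $\rho$ is a homomorphism to factor the sum, collapse the $\sum_\ell \alpha_\ell \rho(h_\ell)$ to $D = Y X^t$, and finally invoke (\ref{eq-WstarYgj}) to turn $W^t Y^{g_j}$ into $\e_j \hat w_j^t$; the remaining scalar is exactly $\beta_{j,s}(z)$ by (\ref{eq-beta_js}).

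Concretely, first I would write
\[
\sum_{\ell=1}^q \alpha_\ell \, a_{g_j h_\ell s}
= \sum_{\ell=1}^q \alpha_\ell \, W^t \rho(g_j h_\ell s) z
= W^t \rho(g_j) \left(\sum_{\ell=1}^q \alpha_\ell \rho(h_\ell)\right) \rho(s) z,
\]
where the last equality uses $\rho(g_j h_\ell s) = \rho(g_j)\rho(h_\ell)\rho(s)$ and linearity (pulling the $g_j$ and $s$ factors out of the $\ell$-sum). Next I would substitute $\sum_\ell \alpha_\ell \rho(h_\ell) = D = Y X^t$, so the expression becomes $W^t \rho(g_j) Y X^t \rho(s) z = W^t Y^{g_j} X^t \rho(s) z$, using the notation $Y^{g_j} = \rho(g_j) Y$.

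Then I would apply the crucial identity (\ref{eq-WstarYgj}), namely $W^t Y^{g_j} = \e_j \hat w_j^t$, to rewrite the above as
\[
\e_j \, \hat w_j^t \, X^t \, \rho(s) z
= \e_j \cdot \bigl( X \hat w_j \bigr)^t \rho(s) z
= \e_j \cdot \ip{X \hat w_j}{\rho(s) z}
= \beta_{j,s}(z)\, \e_j,
\]
where the final step is the definition (\ref{eq-beta_js}). Since every step is a routine substitution once (\ref{eq-WstarYgj}) is in hand, I do not anticipate any real obstacle; the only point to watch is the matrix dimension bookkeeping ($\e_j \hat w_j^t$ is a $t \times R$ matrix, $X^t \rho(s) z \in \F^R$ is a column vector, and their product is the scalar $\ip{X\hat w_j}{\rho(s)z}$ times $\e_j \in \F^t$), which lines up correctly.
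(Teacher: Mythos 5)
Your proof is correct and follows exactly the same chain of substitutions as the paper: expand $a_{g_j h_\ell s}$, factor out $\rho(g_j)$ and $\rho(s)z$ to recover $D = YX^t$, then apply (\ref{eq-WstarYgj}) and read off the scalar $\beta_{j,s}(z)$. The dimension bookkeeping you spell out at the end is a nice sanity check but the argument is identical to the paper's.
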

\begin{proof}
we proceed to calculate
\begin{eqnarray*}
	\sum_{\ell=1}^q  \alpha_\ell a_{g_j h_\ell s} &=& \sum_{\ell=1}^q \alpha_\ell W^t \rho(g_j h_\ell s)z \\
	&=& W^t \rho(g_j) \sum_{\ell=1}^q  \alpha_\ell \rho(h_\ell) \rho(s)z \\
	&=& W^t \rho(g_j) D \rho(s)z  \\
	&=& W^t \rho(g_j) Y X^t \rho(s) z \\
	&=& W^t Y^{g_j} X^t \rho(s) z. \\
	&=& \e_j \hat w_j^t X^t \rho(s) z \hspace{2cm} \text{By (\ref{eq-WstarYgj})}\\
	&=& \beta_{j,s}(z) \e_j.
\end{eqnarray*}	
\end{proof}

To show that $A$ is an LDC we need to construct, for each $j$, a large $q$-matching $M_j$ of tuples spanning $\e_j$. Claim~\ref{cla-spanning_q_tuples} tells us that, for any $j \in [t]$, we can include in $M_j$ any $q$-tuple of the form
$$T_{j,s} = \{ g_j h_1 s, \ldots, g_j h_q s \} $$   for which $\beta_{j,s}(z) \neq 0$. To find out the maximum number of disjoint tuples of this form we first consider the case when all the $\beta_{j,s}(z)$ are non-zero (which we can guarantee e.g. when $\F$ is infinite). We may  ignore the action of $g_j$ and determine the maximum number of disjoint tuples in the orbit of the tuple $(h_1,\ldots,h_q)$ under the right action of $G$. It is easy to see that one can get at least $|G|/q^2$ such disjoint $q$-tuples using the fact that each group element can be in at most $q$ distinct tuples. For each $j \in [t]$, let $M_j'$ denote such a $q$-matching  of size $|M_j'| \geq |G|/q^2$. Every tuple in $M_j'$ is associated with some hyperplane $\beta_{j,s}(z)=0$ and we can only keep the tuples for which $\beta_{j,s}(z) \neq 0$. If $\F$ is finite, a random choice of $z \in \F^n$ will avoid each hyperplane with probability $1 - 1/|\F|$ and so, using expectation, there exists some $z \in \F^n$ for which we can keep at least a  $1 - 1/|\F|$ fraction of the tuples in $\cup_{j}M_j'$.  Picking such a $z$, and letting, for each $j$, $M_j$ be the $q$-matching obtained from $M_j'$ by throwing away all tuples with $\beta_{j,s}(z)=0$, we get

\begin{equation}
	\sum_{j \in [t]} |M_j| \geq \left(1 - \frac{1}{|\F|}\right)\sum_{j \in [t]} |M_j'| \geq \left(1 - \frac{1}{|\F|}\right) \cdot \frac{1}{q^2}\cdot  t \cdot |G|.
\end{equation}

Hence, $A$ is  a $(q,\delta)$-LDC with $\delta \geq \theta_\F/q^2$.  This completes the proof of Theorem~\ref{thm-rep2LDC}.

\paragraph{Proof of Theorem~\ref{thm-repLDC-2q}:}
In the case $h_1 = h$, $h_2 = \id$ (and $\alpha_1 = 1$, $\alpha_2 = -1$)  Claim~\ref{cla-spanning_q_tuples} says that for all $j \in [t]$ and $s \in G$ we have
$$a_{g_j h s } - a_{g_j s} = \beta_{j,s}(z) \e_j.$$
 Hence, it is enough to construct many disjoint pairs of the form $(g_j h s, g_j s)$ with $\beta_{j,s}(z) \neq 0$. Assume first that all $\beta_{j,s}(z)$ are non-zero. Dropping the $g_j$'s as before, we observe that the graph with edges $\{(hs,s) | s \in G\}$ is a disjoint union of cycles of length $\ord(h)$ (or a perfect matching, if $\ord(h)=2$). If $\ord(h)$ is even then this graph has a perfect matching, which gives $\delta=1/2$. If $\ord(h)$ is odd, we can pick $(\ord(h)-1)/2$ disjoint edges in each cycle, which gives $\delta = 1/2 - 1/2\ord(h)$ as required. The argument for finite $\F$ is the same as before, and loses another factor of $\theta_\F$.

\begin{com}\label{com-lambda}
	If we start with $\rho(h) - \lambda \I$ the proof is essentially the same. However, we end up with a $2$-LDC that is not quite in `special' form as the pairs in $M_j$ span $e_j$ with the help of $\lambda$. That is, for every pair $(g,g') \in M_j$ we have $a_g - \lambda a_{g'} = \beta \cdot e_j$ for some non-zero $\beta$. This can be turned into special form LDC by adding to the sequence $A$ the sequence $\lambda A$ of all $\lambda a_g, g \in G$. The sizes of the matchings  stay the same but the length of the code doubles to $2|G|$ and so $\delta$ becomes $\delta/2$. Applying Theorem~\ref{thm-2LDClowerbound-new} with these parameters we get the bound stated in (\ref{eq-rankwithlambda}).
\end{com}

\section{Lower bounds for $2$-LDCs over arbitrary fields}\label{sec-LDC}

In this section we prove Theorem~\ref{thm-2LDClowerbound-new}. As was mentioned in the introduction, this is essentially the proof appearing in \cite{GKST02} for the binary case and is attributed to Alex Samorodnitsky. We simply observe that one can apply  the same proof steps also for larger alphabets.
We start with preliminaries on the entropy function.
For a distribution $$\mu = (\mu_1,\ldots,\mu_n) \in [0,1]^n, \sum_{i}\mu_i = 1$$ 
we let 
$$ H(\mu) = \sum_{i, \mu_i\neq 0} \mu_i \log_2(1/\mu_i) $$ be the (Shannon) entropy function. If $X$ is a random variable with finite support, we use $H(X)$ to denote the entropy of the distribution of $X$. For two random variables $X$ and $Y$ we  define the conditional entropy of $X$ given $Y$ to be $$ H(X|Y) = \sum_{y \in \supp(Y)}\Pr[Y = y]H(X|Y=y),$$
where $H(X|Y=y)$ is the entropy of $X$ conditioned on the event  $Y=y$.

We will use the following basic properties of the entropy function (see any introductory text on information theory, e.g. \cite{CovTho06}).
\begin{claim}\label{cla-entropy}
The entropy function $H(\cdot)$ satisfies the following:
\begin{enumerate}
	\item If $X$ is a random variable taking at most $m$ values then $H(X) \leq \log_2(m)$.
	\item For every $X$ and $Y$ we have $$ H(X) \geq H(X|Y).$$
	\item For every random variable $X$ and any event $E$ we have that 
$$ H(X) \geq \Pr[E] H(X|E). $$
	\item Let $X = (X_1,\ldots,X_n)$ be a random variable distributed over  $\Sigma^n$ with $\Sigma$ a finite set. Then
 $$ H(X) = \sum_{i=1}^{n} H(X_i | X_{1,\ldots,i-1}),$$ where $X_{1,\ldots,i-1} = (X_1,\ldots,X_{i-1})$ is the prefix of $X$.
 
\end{enumerate}
\end{claim}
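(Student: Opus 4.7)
The plan is to establish each item by reducing to the concavity of $\log_2$ (equivalently, nonnegativity of relative entropy) together with one direct algebraic identity for the chain rule. The facts are standard so I would be brief and make sure the later application only uses what is actually proved here.

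For item (1), I would apply Jensen's inequality to the concave function $\log_2$. Writing $H(X) = \sum_{i\in \supp(X)} \mu_i \log_2(1/\mu_i)$ and pulling the sum inside gives $H(X) \leq \log_2\bigl(\sum_i \mu_i \cdot (1/\mu_i)\bigr) = \log_2(|\supp(X)|) \leq \log_2 m$. For item (4), I would first check the two-variable identity $H(X_1,X_2) = H(X_1) + H(X_2 \mid X_1)$ by substituting $p(x_1,x_2) = p(x_1)\, p(x_2 \mid x_1)$ into the definition of $H(X_1,X_2)$ and splitting the logarithm; the $\log p(x_1)$ part marginalizes to $H(X_1)$ and the $\log p(x_2\mid x_1)$ part groups by $x_1$ into $H(X_2 \mid X_1)$. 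Iterating this identity $n-1$ times, applied to the successive pairs $(X_{1,\ldots,i-1}, X_i)$, yields the chain rule as stated.

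For item (2), I would deduce it from the nonnegativity of mutual information $I(X;Y) = H(X) - H(X \mid Y)$. The slick way is to write
\[
I(X;Y) = \sum_{x,y} p(x,y)\log_2 \frac{p(x,y)}{p(x)\,p(y)},
\]
and use Jensen's inequality on the convex function $-\log_2$ applied to the random variable $p(X)p(Y)/p(X,Y)$ under the joint distribution, giving $I(X;Y) \geq -\log_2\bigl(\sum_{x,y}p(x)p(y)\bigr) = 0$. Expanding $I(X;Y)$ using the two-variable chain rule from item (4) then produces $H(X) \geq H(X \mid Y)$.

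For item (3), I would apply item (2) with $Y$ taken to be the indicator variable $\mathbf{1}_E$, which takes only two values. By definition of conditional entropy,
\[
H(X \mid \mathbf{1}_E) = \Pr[E]\, H(X \mid E) + \Pr[\bar E]\, H(X \mid \bar E) \geq \Pr[E]\, H(X \mid E),
\]
since entropies are nonnegative, and by item (2) this is at most $H(X)$. I don't expect any real obstacles here; the only place to be a little careful is handling zero-probability atoms in the definitions (adopting the convention $0\log_2(1/0)=0$) so that the manipulations in items (2) and (4) are valid even when some conditional probabilities vanish.
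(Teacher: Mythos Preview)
Your proposal is correct; each item is proved by the standard textbook argument (Jensen for item~(1) and nonnegativity of KL divergence for item~(2), the indicator-variable trick for item~(3), and the telescoping chain rule for item~(4)). The paper itself does not give a proof of this claim at all --- it simply states the properties as standard and refers the reader to \cite{CovTho06} --- so your write-up is already more detailed than what appears there.
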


The following claim is the only place where we have to argue about a larger alphabet more carefully.
\begin{claim}\label{cla-match-entropy}
	Let $X$ be a random variable distributed uniformly over $[t]$ and let $M$ be a collection of $s$ disjoint pairs over $[t]$ (i.e., a $2$-matching of size $s$). Let $f : [t] \rightarrow T$ be any function to some finite set $T
$. Suppose that for every pair $\{j_1,j_2\} \in M$ we have $f(j_1) \neq f(j_2)$. Then $$ H(f(X)) \geq \frac{2s}{t}.$$
\end{claim}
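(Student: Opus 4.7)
The plan is to reduce the inequality to a one-bit lower bound on a conditional entropy, then extract that bit by conditioning on which pair of $M$ the random index $X$ lies in. First, I will introduce the event $E$ that $X$ belongs to one of the matched pairs, i.e.\ $E = \{X \in \bigcup_{R \in M} R\}$. Since the $s$ pairs in $M$ are disjoint and $X$ is uniform on $[t]$, this event has probability $2s/t$. Applying part 3 of Claim~\ref{cla-entropy},
$$H(f(X)) \;\geq\; \Pr[E] \cdot H(f(X) \mid E) \;=\; \frac{2s}{t}\, H(f(X) \mid E),$$
so it suffices to prove $H(f(X) \mid E) \geq 1$.

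Next, I will introduce an auxiliary random variable $P$, defined on the event $E$, equal to the unique pair $R \in M$ containing $X$. The key observation is that, conditional on $E$ and on any fixed value $P = \{j_1, j_2\}$, the variable $X$ is uniform on the two-element set $\{j_1, j_2\}$, so $f(X)$ equals $f(j_1)$ with probability $1/2$ and $f(j_2)$ with probability $1/2$. The hypothesis $f(j_1) \neq f(j_2)$ then makes this a uniform distribution on two distinct outcomes, whose entropy is exactly $1$. Averaging over $P$ using the definition of conditional entropy adopted in the paper yields $H(f(X) \mid P, E) = 1$.

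Finally, invoking ``conditioning reduces entropy'' (part 2 of Claim~\ref{cla-entropy}), applied inside the probability space conditioned on $E$, I conclude that $H(f(X) \mid E) \geq H(f(X) \mid P, E) = 1$, which combined with the first step gives the claim. I do not anticipate a genuine obstacle here: the only subtle point is that parts 2 and 3 of Claim~\ref{cla-entropy} must be applied inside a conditional probability space, but this follows immediately from the definition of conditional entropy used in the paper. The pleasant feature of this argument is that the distinctness hypothesis is used exactly once, to produce one full bit of entropy per matched pair, \emph{independently} of the size of $T$ — which is precisely the point at which the binary proof of~\cite{GKST02} generalizes to arbitrary alphabets.
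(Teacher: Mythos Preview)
Your proof is correct and follows essentially the same approach as the paper: condition on the event that $X$ lands in a matched pair, then further condition on \emph{which} pair to extract one full bit, and finally use ``conditioning reduces entropy'' to remove that extra conditioning. The only cosmetic difference is that the paper first treats the perfect-matching case $t=2s$ separately (with the pair-index variable $Y=\lceil X/2\rceil$) and then reduces the general case to it via the event $E$, whereas you do both steps in one pass; the content is identical.
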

\begin{proof}
	W.l.o.g we may assume that
	$$ M = \{ \{2i-1,2i\} \,|\, i=1,\ldots,s\}.$$
	Let us first consider the case when $t = 2s$ and so $M$ is a perfect matching. Let 
	$$ Y = \left\lceil \frac{X}{2}\right\rceil$$
	denote the index $i$ of the matching pair in which $X$ lands. For every $i \in [s]$, conditioned on the event $Y=i$, $f(X)$ takes two different values with equal probability and so 
	$$ H(f(X)| Y=i) = H(1/2,1/2) =  1. $$
	We now have
	$$ H(f(X)) \geq H(f(X)|Y) =  \sum_{i=1}^s \Pr[Y = i] H(f(X) | Y = i) = 1, $$ 
	as was required (for the case $t = 2s$).
	
	Next suppose $2s < t$ and let $E$ be the event 
	 that $X$ lands in one of the pairs in $M$ :
	 $$ E  = \{ X \leq 2s \}. $$
	 From the case $t = 2s$ we know that 
	 $$ H(f(X)|E) \geq 1.$$  
	 It follows that
	 $$ H(f(X)) \geq \Pr[E] H(f(X)|E) \geq \frac{2s}{t}$$ and so we are done.
\end{proof}

\subsection{Proof of Theorem~\ref{thm-2LDClowerbound-new}}
	Let 
	$$X = (X_1,\ldots,X_n)$$ 
	be a random variable uniformly distributed over the sequence $A$.  More formally let $K$ be a uniform random variable over $[m]$ and let $X = a_K$.
	
	By Item 1 of Claim~\ref{cla-entropy} we have
	\begin{equation}\label{eq-entropy-logupper}
		H(X) \leq \log_2(m).
	\end{equation}
	By Item 4 of Claim~\ref{cla-entropy} we have
	\begin{equation}\label{eq-entropy-sumcond}
		H(X) = \sum_{i=1}^{n} H(X_i | X_{1,\ldots,i-1}).
	\end{equation}
	We will now proceed to bound (\ref{eq-entropy-sumcond}) from below term-by-term. We first set up some notations. For each  $i \in [n]$ and  $b \in \supp(X_{1,\ldots,i-1})$,  let 
	$$ J_i^b = \{ j \in [m] \,|\, (a_j)_{1,\ldots,i-1} = b\}$$ 
	be the (non empty) set of indices in which $b$ appears as a prefix (for $i=1$ we have only one set $J_1^{\emptyset} = [m]$). Notice that the matching $M_i$ respects the partition induced by the sets $J_i^b$ since a pair in $M_i$ must differ only in the $i$'th coordinate. We can thus partition the matching $M_i$ into matchings 
	$$ M_i^b = \{ \{j_1,j_2\} \in M_i \,|\, j_1,j_2 \in J_i^b\}$$
	So that 
	\begin{equation}
		\sum_{b}|M_i^b| = |M_i|.
	\end{equation}
	
Now, from Claim~\ref{cla-match-entropy} we get that for every $i \in [n]$ and every $b \in \supp(X_{1,\ldots,i-1})$ it holds that
\begin{equation}
	H(X_i | X_{1,\ldots,i-1}=b) \geq \frac{2|M_i^b|}{|J_i^b|}.
\end{equation}
Therefore,
\begin{eqnarray*}
	H(X_i|X_{1,\ldots,i-1}) &=& \sum_{b \in \supp(X_{1,\ldots,i-1})} \Pr[ X_{1,\ldots,i-1}= b ] H(X_i | X_{1,\ldots,i-1}= b )\\
	&\geq &  \sum_{b \in \supp(X_{1,\ldots,i-1})} \frac{|J_i^b|}{m} \cdot \frac{2|M_i^b|}{|J_i^b|}.\\
	&=& \frac{2|M_i|}{m} 
\end{eqnarray*}
Plugging this into (\ref{eq-entropy-sumcond}) we get 
$$ H(X) \geq \sum_{i=1}^n \frac{2|M_i|}{m} \geq  2\delta n, $$ 
which, using (\ref{eq-entropy-logupper}) implies the desired bound. \qed

\section{Using LDC lower bounds for $q \geq 3$:}\label{sec-3LDC}

 It is natural to try and use lower bounds for general $q$-LDC to show that $q$ matrices of the form $\rho(g)$ cannot span a low rank matrix. Unfortunately, the bounds known for $q \geq 3$ are much weaker and, in general, not believed to be tight. In particular, for large fields, the best lower bounds known for $q \geq 3$ are at best quadratic $m \geq n^2$. Combining such a bound with Theorem~\ref{thm-rep2LDC} will not give anything useful since we know that any irreducible representation has dimension at most $|G|^{1/2}$ and hence the LDC obtained from Theorem~\ref{thm-rep2LDC} is already quadratic at best (even for $R=1$). Over small fields there is a cubic lower bound known for $q=3$.\footnote{ The bound holds also for non linear LDCs but we will state it only for linear since we haven't defined non linear codes.}

\begin{thm}[\cite{AGVK23}]\label{thm-3LDC}
Let $\F$ be a finite field. Suppose $A = (a_1,\ldots,a_m) \in (\F^n)^m$ is a $(3,\delta)$-LDC. Then $$ n^3 \leq C(\F,\delta) \cdot m \log_2^6(m),$$ where $C(\F,\delta)$ is a constant depending only on $\delta$ and $|\F|$ (the dependence is polynomial).
\end{thm}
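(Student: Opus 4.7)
}
My plan is to follow the Kikuchi graph method, which has become the standard route to cubic-type lower bounds on $q$-LDCs for $q \geq 3$ over finite fields. The first step is to reduce to a structured form of the LDC: by standard smoothing and averaging on the matchings $M_i$, one may assume without loss of generality that every index $j \in [m]$ appears in roughly the same number of decoding triples, and that each triple $\{j_1,j_2,j_3\}$ together with the witnessing scalars $(\lambda_1,\lambda_2,\lambda_3)$ satisfying $\lambda_1 a_{j_1} + \lambda_2 a_{j_2} + \lambda_3 a_{j_3} = \e_i$ belongs to a regular family. The decoding data then packages into a homogeneous system of $\Omega(\delta m n)$ linear equations over $\F$ which is known to be simultaneously satisfiable, and the task becomes a refutation-style lower bound: if $m$ were too small relative to $n$, no such satisfiable system could exist.

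Next, for a level parameter $\ell$ to be optimized (typically $\ell = \Theta(\log m)$), I would introduce the Kikuchi graph $\cK_\ell$ whose vertex set is $\binom{[m]}{\ell}$ and whose signed edges are indexed by the decoding triples: for each triple $T=\{j_1,j_2,j_3\}$ and each coordinate $i$ being decoded, I connect every pair of $\ell$-sets whose symmetric difference is compatible with $T$ by an edge carrying a sign (or, over larger fields, an element of $\F^\times$) determined by $(\lambda_1,\lambda_2,\lambda_3)$ and $i$. The heart of the proof is a spectral bound of the form $\|\cK_\ell\| \leq \sqrt{d \cdot \poly\log m}$, where $d$ is the average degree, obtained by writing $\cK_\ell$ as a sum of independent random sign matrices (one per triple) and applying matrix Khintchine together with a trace-method count of closed walks. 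Over $\F$ with $|\F| > 2$ this step is complicated by the $|\F|$-valued signs, and the polynomial dependence of $C(\F,\delta)$ on $|\F|$ enters here through a union bound over characters.

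Finally, I would combine this spectral upper bound with a planted lower bound: the $n$ coordinates of the LDC furnish an explicit test vector on $\binom{[m]}{\ell}$ (the ``lift'' of the decoding matchings to level $\ell$) whose Rayleigh quotient on $\cK_\ell$ is at least $\Omega(\ell \cdot \delta n / m)$. Matching the planted eigenvalue against the random spectral bound and optimizing $\ell = \Theta(\log m)$ yields $n^3 \leq C(\F,\delta) \cdot m \log_2^6 m$, where the exponent $6$ on $\log_2 m$ records the choice of $\ell$ together with the polylogarithmic slack from matrix Khintchine and from the sparsification needed to bring the hypergraph into the concentration regime. I expect the main obstacle to be the spectral norm estimate: controlling the closed-walk count in $\cK_\ell$ in the presence of dependencies across the $n$ coordinates and over $\F$-valued weights is delicate, and is where the bulk of the technical work in \cite{AGVK23} lies.
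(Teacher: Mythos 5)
This statement is quoted verbatim from \cite{AGVK23} and the present paper does \emph{not} prove it: it is used purely as a black-box ingredient to derive the rank bound that follows it. There is therefore no in-paper proof for your sketch to be compared against. For what it is worth, your outline does track the general shape of the Kikuchi-matrix / spectral-refutation argument of \cite{AGVK23} (regularize the decoding hypergraph, pass to a level-$\ell$ Kikuchi graph with $\ell = \Theta(\log m)$, bound its spectral norm via matrix concentration plus a trace-method walk count, and contradict a planted Rayleigh quotient), but as written it is a summary rather than a proof: each quantitative step is deferred, the handling of $\F$-valued weights via characters is only gestured at, and you explicitly flag the core spectral norm estimate as unresolved. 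Within the scope of this paper nothing beyond the citation is needed; if you wished to actually reprove the result you would have to supply the regularization lemma, the precise definition of the signed Kikuchi adjacency matrix over $\F$, the matrix Khintchine / trace bound with its logarithmic losses, and the planted lower bound, none of which are present here.
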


Combining this bound with Theorem~\ref{thm-rep2LDC}, we get the following very weak form of Theorem~\ref{thm-main2LDCsep} for $q=3$.
\begin{thm}\label{thm-3LDC}
Let $\F$ be a finite field and let $\rho : G \rightarrow \GL(\F^n)$ be an irreducible representation of a finite group $G$. Let $h_1,h_2,h_3 \in G$ be group elements and $\alpha_1,\alpha_2,\alpha_3 \in \F$ be scalars such that $$\alpha_1 \rho(h_1) + \alpha_2 \rho( h_2) + \alpha_3 \rho(h_3) \neq 0.$$ Then
$$ \rank( \alpha_1 \rho(h_1) + \alpha_2 \rho( h_2) + \alpha_3 \rho(h_3) ) \geq \frac{n}{c(\F) |G|^{1/3}\log^2_2|G|},$$ with $c(\F)$ a constant depending only on $|\F|$.
\end{thm}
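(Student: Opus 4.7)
The plan is to run the same pipeline used in the proof of Theorem~\ref{thm-main2LDCsep}, but with $q=3$ in place of $q=2$, invoking the cubic lower bound of \cite{AGVK23} (quoted just above) in place of Theorem~\ref{thm-2LDClowerbound-new}.

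Concretely, set
$$ R := \rank(\alpha_1\rho(h_1)+\alpha_2\rho(h_2)+\alpha_3\rho(h_3)), $$
which by assumption is a positive integer. Theorem~\ref{thm-rep2LDC} applied with $q=3$ supplies a linear $(3,\delta)$-LDC $A \in (\F^t)^m$ with $m = |G|$, $t = \lceil n/R \rceil \geq n/R$, and $\delta = \theta_\F/9$. Substituting these parameters into the \cite{AGVK23} bound $t^3 \leq C(\F,\delta) \cdot m \log_2^6 m$ and rearranging yields
$$ R^3 \geq \frac{n^3}{C(\F,\theta_\F/9) \cdot |G| \log_2^6 |G|}. $$
Taking cube roots produces the claimed lower bound, with $c(\F) := C(\F,\theta_\F/9)^{1/3}$, which depends only on $|\F|$ since $\theta_\F$ does.

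There is essentially no obstacle: the proof is a mechanical chaining of two black boxes already available in the paper. The only bookkeeping points to verify are that the alphabet hypothesis of \cite{AGVK23} (finite field) matches the assumption of the theorem, and that the $(3,\delta)$-LDC produced by Theorem~\ref{thm-rep2LDC} satisfies the convention under which \cite{AGVK23} is stated --- both are immediate from inspection. As noted in the paragraph preceding Theorem~\ref{thm-3LDC}, the bound is weak and likely far from tight; tightening it would require either improved $3$-LDC lower bounds or a direct argument that exploits additional structure of the representation.
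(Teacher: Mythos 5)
Your proof is correct and coincides with the paper's (implicit) proof: the paper states the result simply as "combining this bound with Theorem~\ref{thm-rep2LDC}," and the mechanical chaining you spell out — take $R$ as the rank, invoke Theorem~\ref{thm-rep2LDC} with $q=3$ to get a linear $(3,\theta_\F/9)$-LDC of length $m=|G|$ over $\F^t$ with $t\geq n/R$, substitute into the \cite{AGVK23} bound $t^3 \leq C(\F,\delta)\,m\log_2^6 m$, and take cube roots — is exactly the intended argument, with the constant $c(\F)=C(\F,\theta_\F/9)^{1/3}$ depending only on $|\F|$ because $\theta_\F$ does.
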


It is widely believed that there are no $q$-LDCs with constant $q$ and with $m  = \poly(n)$. The best known constructions \cite{Yekhanin08,Efremenko09} give $m < 2^{n^{o(1)}}$ but are far from polynomial. It is also true that all known constructions of LDCs can be derived from low rank (in fact, rank one) matrices spanned by $q$ group elements (see \cite{Efremenko09}).  We make the following conjecture, which would follow from super-polynomial lower bounds on constant query LDCs.\begin{conjecture}
	For every $\eps>0$ and $q \in \N$ there is $N_{\eps,q} \in \N$ such that the following holds for all $n > N_{\eps,q}$:
	
	Let $\rho : G \rightarrow \GL(\F^n)$ be an irreducible representation of a finite group $G$ such that $n \geq |G|^\eps$. Let $h_1,\ldots,h_q \in G$ and let $\alpha_1,\ldots,\alpha_q \in \F$ be such that 
	$$ \sum_{\ell=1}^q \alpha_\ell \rho(h_\ell) \neq 0.$$ Then
	$$ \rank\left(\sum_{\ell=1}^q \alpha_\ell \rho(h_\ell) \right) \geq n^{1-\eps}$$
\end{conjecture}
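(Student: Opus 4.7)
The plan is to reduce the conjecture to a hypothesized super-polynomial lower bound for $q$-query LDCs and then apply Theorem~\ref{thm-rep2LDC}. The LDC hypothesis I would assume is the widely believed statement that for every fixed $q \geq 2$, every fixed field $\F$, and every $\delta > 0$, any linear $(q,\delta)$-LDC $A \in (\F^t)^m$ must have $m = t^{\omega(1)}$, where the $\omega(1)$ factor may depend on $q,\delta,\F$ but tends to infinity with $t$. This is precisely the open conjecture that no constant-query LDC can have polynomial length, and it is what would drive the argument.

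Granting this, I would proceed by contrapositive. Suppose $R := \rank\left(\sum_\ell \alpha_\ell \rho(h_\ell)\right) < n^{1-\eps}$. Theorem~\ref{thm-rep2LDC} then produces a linear $(q,\delta)$-LDC $A \in (\F^t)^m$ with $m = |G|$, $t = \lceil n/R \rceil > n^\eps$, and $\delta = \theta_\F/q^2$, a positive constant depending only on $q$ and $|\F|$. Combining $t > n^\eps$ with the hypothesis $n \geq |G|^\eps$, i.e.\ $|G| \leq n^{1/\eps}$, yields $m = |G| \leq n^{1/\eps} \leq t^{1/\eps^2}$. So the code produced has polynomial length in $t$ of bounded degree $1/\eps^2$. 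The assumed super-polynomial LDC lower bound then supplies a threshold $t_0 = t_0(\eps,q,\F)$ past which $m \geq t^{1/\eps^2 + 1}$, contradicting $m \leq t^{1/\eps^2}$ once $n$ (and hence $t$) exceeds some $N_{\eps,q}$, as required.

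The hard part, by a wide margin, is the LDC lower bound itself: proving $m = t^{\omega(1)}$ for $q$-LDCs with $q \geq 3$ is a central open problem, with the best current records being polynomial (cubic for $q=3$ over small finite fields by Theorem~\ref{thm-3LDC}, and even weaker over large fields). Worse, the correspondence between LDCs and representations appears to go in both directions: essentially all known subexponential-length constant-query LDCs are built from irreducible representations admitting rank-one relations among a few group elements, so a direct representation-theoretic attack on the conjecture would in effect be solving the LDC lower bound problem. A natural unconditional intermediate target is to improve Theorem~\ref{thm-3LDC} from a cubic bound to a quasi-polynomial one, say $m \geq 2^{\log^{\omega(1)} t}$; combined with Theorem~\ref{thm-rep2LDC} this would already yield the conjecture for $q=3$ with the $n^\eps$ factor replaced by $\poly\log n$.
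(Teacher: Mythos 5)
This statement is a \emph{conjecture}, and the paper offers no proof of it --- only the one-sentence remark immediately following, namely that a counterexample would, via Theorem~\ref{thm-rep2LDC}, produce an infinite family of $(q,\Theta(1/q^2))$-LDCs of polynomial length $m \leq t^{C}$ with $C = 1/\eps^2$. Your conditional derivation is precisely that remark spelled out: from $R < n^{1-\eps}$ you get $t > n^\eps$, from $n \geq |G|^\eps$ you get $m = |G| \leq n^{1/\eps} < t^{1/\eps^2}$, and the assumed super-polynomial LDC lower bound gives the contradiction for $n$ large. So you have correctly reproduced the paper's own reasoning and correctly identified the sole missing ingredient as the open super-polynomial $q$-LDC lower bound; nothing in the paper goes further than this. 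One small point worth flagging: as written the conjecture's threshold $N_{\eps,q}$ carries no dependence on $\F$, whereas your (and the paper's) reduction would inherit whatever $\F$-dependence the LDC lower bound threshold has --- over large or infinite fields the known bounds for $q \geq 3$ are only quadratic, so the hypothesized $m = t^{\omega(1)}$ bound would need to hold uniformly in $\F$, or else $N$ should really be $N_{\eps,q,\F}$.
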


Indeed, by Theorem~\ref{thm-rep2LDC}, a counter example to this conjecture would give us some fixed $q$ and $C$ and an infinite family of $(q,1/q^2)$-LDCs with $m \leq  n^C$  (take $C = 1/\eps^2$).

\bibliographystyle{alpha}

\bibliography{IrrepSeparate}

\end{document}